\theoremstyle:=definition,remark,plain\do{%
        \expandafter\g@addto@macro\csname th@\theoremstyle\endcsname{%
            \addtolength\thm@preskip\parskip
            }%
        }
\newtheorem{theorem}{Theorem}
\newtheorem*{theorem*}{Theorem}
\newtheorem{lemma}[theorem]{Lemma}
\newtheorem{conjecture}[theorem]{Conjecture}
\theoremstyle{definition}
\newtheorem*{remark*}{Remark}
\newcommand{\Q}{\mathbb{Q}}
\newcommand{\R}{\mathbb{R}}
\newcommand{\LL}{\mathcal{L}}
\newcommand{\s}{\mathbb{S}}
\newcommand{\pth}[1]{\left(#1\right)}
\definecolor{blue}{rgb}{0,0,1}
\definecolor{red}{rgb}{1,0,0}
\definecolor{grey}{rgb}{0.6,0.6,0.6}
\definecolor{purple}{rgb}{0.6,0,1}
\begin{document} 

\title{The Topology of the set of line Transversals}
\author{Otfried~Cheong \and Xavier~Goaoc \and Andreas~F.~Holmsen}

\maketitle 

\begin{abstract}
  We prove that for any set $F$ of $n\ge 2$ pairwise disjoint open convex sets in $\R^3$, the connected components of the set of lines intersecting every member of $F$ are contractible. The  same result holds for directed lines.
\end{abstract}

\section{Introduction}

Let $F$ be a family 
of convex sets in $\R^d$. A {\em line transversal} to
$F$ is a line that intersects every member of $F$. Here is the main
result that we prove in this note:

\begin{theorem}\label{thm:toptrans}
  For any family $F$ of $n \ge 2$ pairwise disjoint open convex sets
  in $\mathbb{R}^3$, every connected component of line transversals to
  $F$ is acyclic.
\end{theorem}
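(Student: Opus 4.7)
The plan is to study the space $T$ of directed line transversals through its direction map. Identifying the space of directed lines in $\R^3$ with the tangent bundle of $\s^2$---a directed line corresponds to $(v,p)$ where $v\in\s^2$ is the unit direction and $p\in v^\perp$ is the foot of the perpendicular from the origin---$T$ becomes an open subset of a smooth $4$-manifold, and there is a natural continuous projection $\pi\colon T\to\s^2$ sending a transversal to its direction. The fiber $\pi^{-1}(v)$ is in bijection with $\bigcap_{K\in F}\pi_v(K)\subset v^\perp\cong\R^2$, where $\pi_v$ denotes orthogonal projection along $v$; this is an open, convex, possibly empty subset of the plane.

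My aim is to show that every connected component $C$ of $T$ fibers, via $\pi$, over a contractible open subset of $\s^2$ with open convex fibers, and then to retract $C$ onto a continuous section. The crucial step is controlling the topology of $\pi(C)$. Because the sets in $F$ are pairwise disjoint and convex, the combinatorial order in which a directed transversal meets them---its geometric permutation---is locally constant, hence constant on $C$. For each ordered pair $(K_i,K_j)$ in $F$, the component $C$ thus fixes a sign $\epsilon_{ij}\in\{+1,-1\}$ recording which of $K_i,K_j$ a transversal in $C$ meets first. The set of unit directions $v$ along which some line meets $K_i$ before $K_j$ is the image in $\s^2$ of the open convex cone $\epsilon_{ij}\cdot\{t(a-b):t>0,\,a\in K_i,\,b\in K_j\}\subset\R^3$, which, since $K_i\cap K_j=\emptyset$, is contained in an open half-space. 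Intersecting these cones over all ordered pairs in $F$ and projecting to $\s^2$ yields an open geodesically convex subset of an open hemisphere; I expect $\pi(C)$ to sit inside such a region and to itself be geodesically convex, once the pairwise cone condition is supplemented by the planar Helly argument that promotes pairwise transversality to joint transversality.

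To upgrade contractibility of $\pi(C)$ to contractibility of $C$, I would pick a canonical point $s(v)$ in each nonempty open convex fiber $\pi^{-1}(v)$ depending continuously on $v\in\pi(C)$---for instance a Steiner point, suitably modified to accommodate unbounded fibers---yielding a continuous section $s\colon\pi(C)\to C$. Straight-line interpolation within each planar fiber then provides a strong fiberwise deformation retraction of $C$ onto $s(\pi(C))\cong\pi(C)$, so $C$ is contractible, hence acyclic as required.

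I expect the main obstacle to be the geodesic-convexity claim for $\pi(C)$: joint transversality of $n$ convex bodies is strictly stronger than the conjunction of pairwise transversalities, so the ``intersection of pairwise cones'' only controls $\pi(C)$ up to passing to a possibly larger superset. Ruling out pinches or spurious loops inside this superset will require a careful fiberwise use of the $2$-dimensional Helly theorem. A secondary technical issue is continuity of the section $s$ up to the boundary of $\pi(C)$, where the fibers may degenerate or become unbounded.
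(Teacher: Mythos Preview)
Your high-level plan matches the paper's: project directed transversals to their direction in $\s^2$, observe that the fibers are open convex planar sets, and transfer topological information from the image $T(F)\subset\s^2$ back to the transversal space. Where the proposal diverges---and where it has a genuine gap---is in the analysis of $T(F)$.

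You propose that each component $\pi(C)$ of $T(F)$ is geodesically convex, arguing via the fixed geometric permutation on $C$ and the pairwise order cones $D_{ij}^{\epsilon_{ij}}$. The pairwise part is fine: each $D_{ij}^{\epsilon}$ is the spherical image of the open convex cone over $K_j-K_i$, hence geodesically convex, and so is their intersection. But $\pi(C)$ is in general a \emph{proper} subset of $\bigcap_{i<j}D_{ij}^{\epsilon_{ij}}$. Your hope that ``the planar Helly argument promotes pairwise transversality to joint transversality'' misreads Helly: in $\R^2$ the Helly number is $3$, so joint intersection of the projections $p_v(K_i)$ is equivalent to intersection of every \emph{triple}, not every pair. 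The set of directions $v$ for which a fixed triple $p_v(K_i),p_v(K_j),p_v(K_k)$ has a common point is not an intersection of pairwise cones and has no apparent geodesic convexity; its boundary is the locus where three moving planar convex sets touch at a single point, which bears no relation to great circles. So the step you flag as ``the main obstacle'' is indeed the entire difficulty, and the sketch does not close it.

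The paper avoids convexity altogether. It proves instead that the complement $N(F)=\s^2\setminus T(F)$ is path-connected, which (since $T(F)$ is open) forces every component of $T(F)$ to be contractible. The path is explicit: from a boundary point $v$ of $N(F)$, a planar Helly argument isolates a critical subfamily of two or three sets whose closed projections meet in at most a point, and one then rotates $v$ toward a great circle coming from a separating hyperplane of two of those sets, checking by an order-of-intersection argument that the whole arc stays in $N(F)$. Vietoris-Begle (for homology only; no section needed) then transfers acyclicity from $T(F)$ to the space of transversals, so the paper never needs a continuous selection like your Steiner-point section and your secondary issue does not arise either.
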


\noindent
Here, the space of lines in $\R^3$ is endowed with its natural
topology (see Section~\ref{s:background}). Recall that a set is {\em
  acyclic} if it has trivial homology in dimension~$1$ and
higher. 
The statement of Theorem~\ref{thm:toptrans} remains true when the lines are directed. A natural question is whether the assumption that the sets be open can be replaced by a compactness condition; We do not know if 
this
is possible, but offer some comments in the final section.

\bigskip

Our proof goes through the analysis of the {\em cone of directions}
$T(F)$ of the family $F$ of convex sets, defined as the set of vectors
$x\in \s^2$ for which there exists a line transversal to $F$ in the
direction $x$. We first establish that $\s^2 \setminus T(F)$ is
path-connected (Lemma~\ref{boundaries}, and the cornerstone of our
proof), and it is a well-known fact that this implies that the connected components of $T(F)$ are contractible. With the Vietoris-Begle mapping
theorem, this implies Theorem~\ref{thm:toptrans}.

\subsection{Context and motivation}

Our motivation to study the topology of sets of line transversals to
convex sets originates in questions raised in combinatorial geometry,
more precisely in {\em geometric transversal theory}. We refer
to the chapters by Goodman, Pollack and Wenger~\cite{gpw-gtt-93} and
by Goodman and Pollack~\cite{gp-gtt-02} or the survey of Holmsen and
Wenger~\cite{holmsen2017helly} for an overview.

\bigskip

Helly's theorem asserts that for any finite family $F$ of convex sets
in $\R^d$, if every $d+1$ members of $F$ have a point in common, then
all members of $F$ have a point in common. A natural question is
whether a similar phenomenon holds for higher-dimensional
transversals, for instance:

\bigskip
\begin{quote}
  Is there a constant $H_d$ such that for any finite family of
  pairwise disjoint convex sets in $\R^d$, if every $H_d$ members of
  $F$ have a line transversals then $F$ has a line transversal.
\end{quote}

\noindent
The answer is negative in general, already for $d=2$, but
positive under further restrictions on $F$. In particular, the
influence of the geometry of the members of $F$ on the existence of
Helly-type theorems was extensively investigated, leading to many
specialized results and techniques on, for instance,
parallelotopes~\cite{s-tscpap-40,ghprs-ccfcts-06}, disjoint translates
of a convex set in the plane~\cite{g-ct-58,katch86,t-pgcct-89} or in
$\R^3$~\cite{hm-nhtst-04}, disjoint planar convex sets with an
ordering condition~\cite{h-uegt-57}, disjoint
balls~\cite{Danzer57,g-ctfs-60,cghp-hhtdus-08,bgp-ltdb-08}, etc.

\bigskip

A more unified picture emerges when one considers the above question
through a topological lens. Helly's theorem enjoys several topological
generalizations, see e.g. ~\cite{helly1930, deb1970, luis2014}, and some of the recent ones relate the
Helly-type properties of a family of sets to some measure of its
topological complexity~\cite{acyclic,hb17}. These results allow to
retrieve in a single stroke several of the special cases once the
influences of the geometry of $F$ on the homology of the set of line
transversals to~$F$ has been elucidated. In the plane, any connected
component of line transversals to disjoint convex sets is
contractible, so the geometry only limits the number of connected
components of line transversals; See the discussion
in~\cite[$\mathsection~7$]{acyclic}. Theorem~\ref{thm:toptrans}
establishes that the same happens in $\R^3$ 
for open sets.
\footnote{Interestingly,
  in several of the geometric settings where Helly-type theorems exist
  for line transversals, their study proceeds through the analysis of
  {\em geometric permutations}, which appear, in hindsight, as a
  combinatorial characterization of the connected components of line
  transversals.} We conjecture that the same holds in higher
dimension:

\begin{conjecture}
  For any family $F$ of $n \ge 2$ pairwise disjoint open
  convex sets in $\mathbb{R}^d$, every connected
  component of line transversals to $F$ is acyclic.
\end{conjecture}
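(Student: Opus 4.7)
The plan is to follow the template used for $d=3$ in the proof of Theorem~\ref{thm:toptrans}. Introduce the cone of directions $T(F) \subset \mathbb{S}^{d-1}$ consisting of those unit vectors $x$ for which some line parallel to $x$ meets every member of $F$; equivalently, by orthogonal projection onto $x^\perp$, those $x$ for which the open convex sets $\pi_x(F_i)$ share a common point. The natural map $\pi$ sending each line transversal to its direction then has fiber over $x \in T(F)$ equal to $\bigcap_i \pi_x(F_i)$ (up to translation along $x$), which is a nonempty intersection of open convex sets, hence convex and in particular contractible. Provided each connected component of $T(F)$ is acyclic, the Vietoris--Begle mapping theorem then transfers acyclicity to the corresponding components of the transversal space, exactly as in dimension three.

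The heart of the matter is therefore to prove that every connected component $C$ of $T(F)$ is acyclic. In dimension $d=3$, Lemma~\ref{boundaries} asserts path-connectedness of $\mathbb{S}^2 \setminus T(F)$, which by $2$-manifold topology forces each $C$ to be simply connected, hence contractible. Path-connectedness of the whole complement is however insufficient in higher dimension, since by Alexander duality it controls only $\tilde{H}^{d-2}$. The natural replacement statement I would aim for is: for every component $C$ of $T(F)$, the set $\mathbb{S}^{d-1} \setminus C$ is acyclic. Alexander duality on $\mathbb{S}^{d-1}$ then gives $\tilde{H}_*(C)=0$ in every degree, as required.

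To establish acyclicity of $\mathbb{S}^{d-1} \setminus C$, I would try to identify the ``walls'' that separate components of $T(F)$: directions $x$ for which the projected family $\{\pi_x(F_i)\}$ just fails to have a common point, witnessed, by Helly's theorem, by some subfamily of size at most $d$. One then attempts to deformation-retract the complement of $T(F)$ onto the union of these walls, and inductively apply the conjecture in dimension $d-1$ to the projected families of open convex sets (and, by restriction, to the subfamily responsible for each wall). The disjointness hypothesis is crucial to ensure that the walls have predictable codimension and that small perturbations do not create new collapses.

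The main obstacle is precisely this higher-dimensional version of Lemma~\ref{boundaries}. In $d=3$ one needs only a single $0$-dimensional connectedness statement, whereas in general one must rule out cycles of every dimension up to $d-2$, and the combinatorial pattern of how walls assemble becomes considerably richer. An induction on $d$ is attractive but not automatic: the induction hypothesis applies to families of open convex sets, while the walls arise from limiting or boundary phenomena of the projections, so careful perturbation arguments may be required to maintain openness and disjointness after projection. Overcoming this combinatorial-topological bookkeeping is, to my mind, the primary difficulty.
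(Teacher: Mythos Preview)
The statement you are attempting is a \emph{conjecture} in the paper, not a theorem; the paper offers no proof of it. There is therefore no ``paper's own proof'' to compare your proposal against. Your outline correctly abstracts the $d=3$ argument: the Vietoris--Begle reduction to the cone of directions $T(F)\subset\mathbb{S}^{d-1}$ goes through verbatim, and you rightly observe that path-connectedness of the complement no longer suffices once $d>3$, since Alexander duality only translates it into vanishing of a single top-degree group. Your proposed replacement---acyclicity of $\mathbb{S}^{d-1}\setminus C$ for each component $C$---is the natural target.

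The gap you yourself identify is the real one: no higher-dimensional analogue of Lemma~\ref{boundaries} is known, and the paper makes no claim to one. Your sketch of a wall-stratification governed by Helly witnesses of size at most $d$, together with an induction on $d$, is a plausible line of attack, but as you note the walls arise from degenerate (boundary) configurations of the projections, so the induction hypothesis about \emph{open} disjoint families does not apply directly. Until that step is made rigorous, what you have is a strategy, not a proof---which is exactly the status the paper assigns to this statement.
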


\bigskip


As a final remark it should be pointed out that the study of geometric transversals is not limited to line transversals. More generally, a {\em $k$-transversal} to a family of convex sets in $\mathbb{R}^d$ is a $k$-dimensional affine flat that intersects every member of the family.
Most studied is the case of 
{\em hyperplane transversals}, that is,  when $k=d-1$, and some notable results are; Necessary and sufficient conditions for the existence of hyperplane transversals \cite{gp1988, pw1990, wen1990, aw1996}, a $(p,q)$-theorem for hyperplane transversals \cite{ak1995}, and results on the topology of the space of hyperplane transversals \cite{abmos2002}. For $1<k<d-1$, much less is known, but there are some intriguing results concerning the topology of the space of $k$-transversals for small families of convex sets \cite{bm2002, mk2011}.

\section{Background}
\label{s:background}

We denote by $\LL$ the space of lines in $\R^3$ equipped with its
usual topology; this topology can be defined for instance via a
parameterization of that space ({\em e.g.}  by Pl\"ucker coordinates)
or by recasting it as a quotient space of an open subset of $\R^3
\times \R^3$, via the map that sends a pair of distinct points to the
line that they span. We denote by $\LL^+$ the the space of directed
lines in $\R^3$, equipped with its usual topology (which can be
defined similarly).

\bigskip

The Vietoris-Begle mapping theorem asserts that in certain conditions,
some of the topological and homological properties of a topological
space $X$ are preserved by continuous projections $p:X \to Y$ with
contractible fibers\footnote{Meaning that for every $y \in p(X)$, the
  set $p^{-1}(y)$ is contractible.}. As discussed
in~\cite[Appendix]{acyclic}, there are several formulations of this
theorem, which vary in the assumption on the spaces $X$ and $Y$ and in
the specific properties that are preserved. Here we will use the
following special case of the Vietoris-Begle mapping theorem (which
follows from~\cite[Lemma~26~(1)]{acyclic}):

\begin{lemma}[Vietoris-Begle for lines in $\R^3$]\label{lem:VB}
  Let $\pi:\LL^+ \to \s^2$ denote the map that associates to a
  directed line its direction. Let $U \subseteq X$ be an open set. If
  $\pi^{-1}(v) \cap U$ is contractible for every $v \in \pi(U)$, then
  for every integer $i \ge 0$ we have $H_i(U,\Q) \simeq
  H_i(\pi(U),\Q)$.
\end{lemma}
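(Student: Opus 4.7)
The plan is to deduce the statement as a direct application of Lemma~26(1) of \cite{acyclic}, a formulation of the Vietoris-Begle mapping theorem tailored to this kind of projection. Before checking the hypotheses of that lemma, I would first observe that $\pi:\LL^+\to\s^2$ carries the structure of a rank-$2$ vector bundle: the fiber over $v\in\s^2$ is the set of directed lines with direction $v$, canonically identified with the $2$-plane $v^\perp\cong\R^2$ (equivalently, $\R^3/\R v$). In particular $\pi$ is an open continuous surjection between smooth manifolds, so $\pi(U)\subseteq\s^2$ is open whenever $U\subseteq\LL^+$ is open, and both $U$ and $\pi(U)$ inherit paracompactness, metrizability, and local contractibility from their ambient manifolds. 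These are the ambient niceness conditions that the cited form of Vietoris-Begle requires.

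With this in hand, the only hypothesis of the cited lemma that still requires verification is the acyclicity of the fibers of the restricted map $\pi|_U:U\to\pi(U)$. But by assumption, for each $v\in\pi(U)$ the fiber $\pi|_U^{-1}(v)=\pi^{-1}(v)\cap U$ (an open subset of the plane $\pi^{-1}(v)\cong\R^2$) is contractible, hence has vanishing reduced rational homology in every degree. Feeding these two facts into Lemma~26(1) of \cite{acyclic} yields the desired isomorphism $H_i(U,\Q)\simeq H_i(\pi(U),\Q)$ for every $i\ge 0$, and the proof is finished.

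The main subtlety, and the reason it is important to appeal to the specific formulation in \cite{acyclic} rather than to a textbook version of Vietoris-Begle, is that the restriction $\pi|_U$ of the bundle projection to an arbitrary open subset $U$ need not be proper or closed. A self-contained derivation would therefore have to go through a version of the Vietoris-Begle theorem that accommodates non-proper maps with acyclic fibers, typically by passing through Cech cohomology and exploiting the local product structure of $\pi$ to build good open covers of $\pi(U)$ whose preimages in $U$ retract onto their fibers. Since this technical work is exactly what Lemma~26(1) of \cite{acyclic} packages up, there is nothing further to do here once the fiber condition has been checked.
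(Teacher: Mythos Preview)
Your proposal is correct and matches the paper's approach exactly: the paper states this lemma without proof, noting only that it ``follows from~\cite[Lemma~26~(1)]{acyclic}'', which is precisely the reference you invoke. Your additional remarks verifying the ambient hypotheses (the bundle structure of $\pi$, openness of $\pi(U)$, contractibility of fibers) simply flesh out what the paper leaves implicit.
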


\bigskip

We will also use the following classical fact (see e.g. \cite[Section 4.2]{Ahlfors}):


\begin{lemma}\label{lem:complement}
  Let $U$ be an open subset of $\s^2$. Every connected component of
  $U$ is contractible if and only if $\s^2 \setminus U$ is
  path-connected.
\end{lemma}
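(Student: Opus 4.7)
The plan is to combine Alexander duality on $\s^2$ (relating reduced homology of an open set to reduced \v{C}ech cohomology of its compact complement) with the topological Riemann mapping theorem (a simply-connected proper open subset of $\s^2$ is homeomorphic to the disk, hence contractible). I first note that because $\s^2$ is locally path-connected, every connected component of $U$ is open and path-connected, and I may assume $U\neq\s^2$ (otherwise $\s^2\setminus U$ is empty and $\s^2$ is not contractible, so neither side of the equivalence holds).

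For the implication needed in the paper, $\s^2\setminus U$ path-connected $\Rightarrow$ each component $V$ of $U$ is contractible, I first show that $\s^2\setminus V$ is connected. The key observation is the decomposition
\[
  \s^2\setminus V \;=\; (\s^2\setminus U) \cup \bigcup_{W\neq V} W,
\]
where the union ranges over the other components of $U$. For each such open component $W$, the boundary $\partial W$ lies in $\s^2\setminus U$: a boundary point inside $U$ would have to belong to some other component $W'$, but then a neighborhood of it would lie in $W'$ and be disjoint from $W$, contradicting that it is a limit of points in $W$. Hence $\overline W$ is connected and meets the connected set $\s^2\setminus U$, so the decomposition exhibits $\s^2\setminus V$ as a union of connected sets sharing the common connected piece $\s^2\setminus U$, hence connected. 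Alexander duality then forces $H_1(V;\Q)=0$; since the fundamental group of an open surface is free (hence isomorphic to its abelianization), $V$ is simply connected, and the Riemann mapping theorem identifies $V$ with the open disk, so $V$ is contractible.

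For the converse, if every component $V_i$ of $U$ is contractible then $H_1(U;\Q)=\bigoplus_i H_1(V_i;\Q)=0$, and Alexander duality gives that $\s^2\setminus U$ is connected. Strengthening connectedness to path-connectedness is the subtle step: connected compact subsets of $\s^2$ need not be path-connected in general (e.g.\ the topologist's sine curve), but the particular structure here---complement of a disjoint union of simply-connected open domains---allows one to join any two points of $\s^2\setminus U$ by a path constructed along the boundaries of the $V_i$, using Carath\'eodory's prime-ends description of the boundary of each simply-connected component, as in the cited section of Ahlfors.

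I expect this last path-connectedness upgrade to be the main obstacle; the duality and Riemann-mapping steps are otherwise routine once the correct setup is in place.
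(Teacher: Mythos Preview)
The paper does not actually prove this lemma; it is quoted as a classical fact with a reference to Ahlfors, so there is no argument in the paper to compare against. Your proof of the direction the paper uses---path-connectedness of $\s^2\setminus U$ implies contractibility of each component of $U$---is correct: the reduction to connectedness of $\s^2\setminus V$ via the decomposition $(\s^2\setminus U)\cup\bigcup_{W\neq V}\overline W$, then Alexander duality, freeness of $\pi_1$ of an open surface, and the Riemann mapping theorem, is a clean and standard chain.

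The converse direction, however, is false as stated, so no ``upgrade from connected to path-connected'' can succeed. You already named the obstruction: take $U=\s^2\setminus T$ with $T$ the closed topologist's sine curve. Then $T$ is compact and connected but not path-connected, while $U$ is open, connected, and---by your own Alexander-duality/free-$\pi_1$/Riemann-mapping argument applied with $V=U$---simply connected, hence contractible. So every component of $U$ is contractible yet $\s^2\setminus U$ is not path-connected. The prime-ends idea cannot rescue this: the conformal map from the disk to $U$ fails to extend continuously to the boundary precisely because $\partial U=T$ is not locally connected, which is exactly Carath\'eodory's criterion for continuous boundary extension. The classical statement one finds in Ahlfors has ``connected'' rather than ``path-connected'', and with that amendment your duality argument already proves both directions without any further work. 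For the paper's purposes this is harmless: Lemma~\ref{lem:open} supplies path-connectedness (hence connectedness) of $\s^2\setminus T(F)$, and only the implication you proved correctly is ever invoked.
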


\section{Building paths between non-transversal directions}

Let $F$ be a finite family of convex sets in $\mathbb{R}^3$. The set
of {\em transversal directions} for $F$ is the the set of vectors
$x\in S^2$ for which there exists a line transversal to $F$ in the
direction $x$. For a vector $x \in \s^2$, let $p_x : \mathbb{R}^3 \to
x^\perp$ denote the orthogonal projection into the orthogonal
complement of $x$. The set of transversal directions for $F$ can also
be expressed as
\begin{equation}\label{eq:TF}
  T(F) = \{x\in \s^2 : \textstyle{\bigcap_{K\in F} p_x(K) \neq \emptyset}.\}
\end{equation}
Note that $T(F)$ is an antipodally symmetric subset of $S^2$. The set
of {\em non-transversal directions} for $F$ is the set $\s^2 \setminus
T(F)$. 

\bigskip

Since the members of $F$ are pairwise disjoint {and open}, we can
choose, for any two members of $F$, some plane that separates them
strictly. By translating these separating hyperplanes to the origin,
we get an arrangement of great circles on $\s^2$. We call the union of
these great circles, one per pair of members of $F$, a set of {\em
  separating directions} for $F$. Note that any set of separating
directions for $F$ is a path-connected subset of $N(F)$. The following
is our main technical ingredient.

\begin{lemma} \label{boundaries}
  Let $F$ be a finite family of {at least two} pairwise disjoint
  open convex sets in $\mathbb{R}^3$. For any set $Y$ of separating
  directions for $F$ and any boundary point $v$ of $N(F)$, there
  exists a path $\gamma : [0,1] \to N(F)$ such that $\gamma(0) = v$
  and $\gamma(1) \in Y$.
\end{lemma}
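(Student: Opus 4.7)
The plan is to build the path $\gamma$ in two stages. For each pair $i<j$, let $A_{ij} = \{x\in\s^2 : p_x(K_i)\cap p_x(K_j)=\emptyset\}$. Each $A_{ij}$ is an open subset of $N(F)$, and since the planes defining $Y$ strictly separate the corresponding pair, the great circle of $Y$ associated to $(K_i,K_j)$ lies in $A_{ij}$. Setting $W=\bigcup_{i<j}A_{ij}$, Stage~(i) is to construct a path in $N(F)$ from $v$ to some point of $W$, and Stage~(ii) is to continue inside a single $A_{ij}$ to the corresponding great circle of $Y$.

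Stage~(ii) is the routine part. A direction $x$ lies in $A_{ij}$ iff $x$ is parallel to some hyperplane separating $K_i$ from $K_j$, and the set $U_{ij}\subset\s^2$ of unit normals of such separators is the intersection of $\s^2$ with an open, pointed convex cone in $\R^3$; in particular $U_{ij}$ is spherically convex and hence contractible. Starting from any $x\in A_{ij}$, pick $n\in U_{ij}$ with $x\perp n$, continuously slide $n$ inside $U_{ij}$ to the normal $n^{*}_{ij}$ defining the chosen great circle of $Y$, and rotate $x$ along so that $x\perp n$ is maintained. The resulting path stays in $A_{ij}\subseteq N(F)$ and terminates on $Y$.

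Stage~(i) is the cornerstone. Assuming $v\notin W$, the open convex planar sets $\{p_v(K):K\in F\}$ pairwise intersect but have empty common intersection, so by Helly's theorem in $\R^2$ some triple $K_a,K_b,K_c\in F$ has projections forming a Helly triangle: pairwise intersecting and enclosing a triangular hole $R$, with no common point. Because $T(F)$ is open (a routine consequence of the openness of each $K$) and $v\in\partial N(F)$, arbitrarily small perturbations of $v$ lie in $T(F)$, which geometrically corresponds to $R$ collapsing to a common point of the three projections. My plan is to exhibit the \emph{opposite} perturbation, which opens the hole, stays in $N(F)$, and, continued far enough, forces one of the three pairwise intersections to vanish, landing us in $W$.

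The main obstacle is making this ``open the Helly triangle'' procedure globally valid. Two things need to be controlled simultaneously along the constructed path. First, the common intersection of \emph{all} projections must remain empty, which requires ruling out that some $K\in F\setminus\{K_a,K_b,K_c\}$ plugs the hole as it opens. Second, an appropriate scalar defect --- for instance, the signed separation distance between $p_x(K_a)$ and $p_x(K_b)$ --- must evolve monotonically along the path and reach zero in finite spherical length. I expect the first to be handled by choosing the deformation direction from the local combinatorics of which three projections bound $R$ near $v$, and the second by a continuity and compactness argument on the continuous family of projections.
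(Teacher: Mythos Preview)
Your Stage~(ii) is workable and plays the role of the paper's $|G|=2$ case. The gap is Stage~(i). First, your obstacle~(a) is not an obstacle: once the three projections $p_x(K_a),p_x(K_b),p_x(K_c)$ have empty common intersection, so does $\bigcap_{K\in F}p_x(K)$; no other member of $F$ can ``plug the hole'', and you only need to stay inside $N(\{K_a,K_b,K_c\})$. The genuine difficulty is~(b): you give no mechanism for choosing a direction in which the Helly triangle opens, nor any reason why continuing would eventually destroy a pairwise intersection. The signed separation you propose is negative at $v$ (the pairs overlap there), and nothing forces it to increase monotonically along any geodesic; the sets can be long and skew so that the hole opens and closes repeatedly as $x$ moves. ``Continuity and compactness'' alone does not produce a path reaching~$W$.

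The paper does not try to open the triangle. It uses an \emph{ordering} argument and goes from $v$ directly to $Y$ along a single great-circle arc. After normalizing $v=(0,0,1)$ and arranging half-spaces so that $K_1\subset\{y>ax\}$, $K_2\subset\{y>bx\}$ with $a<0<b$, and $K_3\subset\{y<0\}$, one walks along $u(\theta)=(\sin\theta,0,\cos\theta)$ until $u(\beta)\cdot n=0$, where $n$ is the chosen separating normal for $K_1,K_2$ coming from $Y$; then $u(\beta)\in Y$. For $0<\theta<\beta$, any line $\ell$ parallel to $u(\theta)$ meeting all three sets would have to meet $K_1$ before $K_2$ (since $u(\theta)\cdot n>0$) but also $K_2$ before $K_1$ (since $\ell\cap K_3\neq\emptyset$ forces $\ell\subset\{y<0\}$, hence $\ell\cap K_1\subset\{x>0\}$ and $\ell\cap K_2\subset\{x<0\}$, while $u(\theta)\cdot(1,0,0)>0$). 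This contradiction keeps the whole arc in $N(\{K_1,K_2,K_3\})\subseteq N(F)$. Along this arc the pairwise projections may well continue to intersect, so the path need not enter your set $W$ before its endpoint; the paper thus bypasses your two-stage split in the three-set case, and the ordering idea is precisely the missing ingredient.
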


\begin{proof}
Consider a point $v$ on the boundary of $N(F)$, that is, $v\in N(F)
\cap \overline{T(F)}$.  (Here $\overline{X}$ denotes the {\em closure}
of $X$.) {The formulation of $T(F)$ given in
  Equation~\eqref{eq:TF} makes it clear that when the members of $F$
  are open, so is $T(F)$. It follows that $N(F)$ is closed, and that }
we have
\[{\textstyle{\bigcap_{K\in F}}}p_v(K)
= \emptyset \neq {\textstyle{\bigcap_{K\in F}}}p_v(\overline{K}).\]
Indeed, the equality on the left must hold or else $v \notin N(F)$, and the
inequality on the right must hold or else $v$ is in the interior of
$N(F)$. It follows that ${\textstyle{\bigcap_{K\in
      F}}}p_v(\overline{K})$ has empty interior (as a subset of
$\mathbb{R}^2$). By a simple application of Helly's theorem there
exists a subfamily $G\subset F$ with $|G|\leq 3$ such that
$\bigcap_{K\in G} p_v(\overline{K})$ has empty interior. Since the
members of $F$ have nonempty interiors, $G$ must consist of two or
three members. These cases will be treated separately.

\begin{figure}[htbp]
    \centering
    \includegraphics{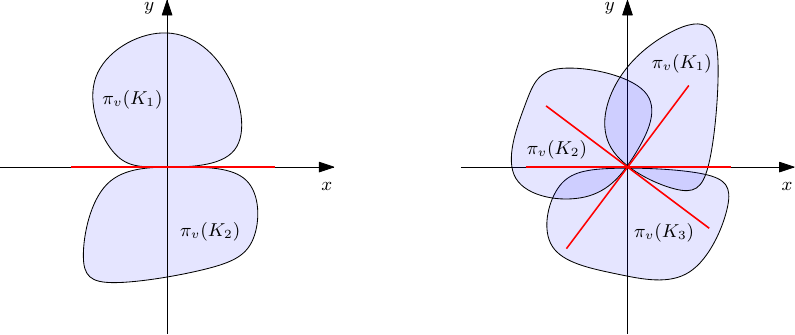}
    \caption{Caption}
    \label{fig:boundary}
\end{figure}

\medskip

Suppose $G = \{K_1,K_2\}$. (See \textsc{Figure} \ref{fig:boundary},
left.)  In this case $p_v(K_1)$ and $p_v(K_2)$ are strictly separated
by a line.  Let $n$ be the unit normal vector of a plane among the
separating directions for $F$ such that $x\cdot n < 0$ for every $x\in
K_1$ and $x\cdot n > 0$ for every $x \in K_2$.  (The vector $n$ must
exists since {the separating set $Y$ contains a great circle from a strictly separating plane for} every pair of sets.) Note that if
$v \cdot n = 0$, then $v\in Y$, and we are done by defining $\gamma$
to be constant path.  It remains to consider the situation when
$n\cdot v \neq 0$. Without loss of generality we may assume that, as depicted in Figure~\ref{fig:boundary} left,
\begin{itemize}
    \item $v=(0,0,1)$,
    \item $v\cdot n > 0$,
    \item $K_1$ lies in the open halfspace $y > 0$, and
    \item $K_2$ lies in the open halfspace $y < 0$.
\end{itemize}

For $\theta \in [0,\pi]$ define the vector $v(\theta)= (0,\sin \theta, \cos\theta)$, and 
let $\alpha \in (0,\pi)$ be the unique value such that $v(\alpha) \cdot n = 0$. 
This means that $v(\alpha) \in Y$, and we claim that for any $0 < \theta < \alpha$ we have $v(\theta)\in N(F)$. Setting $\gamma(t) = v(\alpha t)$ gives us the desired path. 

To see why $v(\theta)\in N(F)$, note first  that for $0 < \theta < \alpha$ we have $v(\theta)\cdot n >0$. This implies that if a line in the direction $v(\theta)$ intersects $K_1$ and $K_2$, then it must intersect $K_1$ before it intersects $K_2$. On the other hand, for any $0< \theta < \pi$, a line in the direction $v(\theta)$ must intersect the open halfspace $y<0$ before it intersects the open halfspace $y>0$. But this implies that such a line would have to intersect $K_2$ before it could intersect $K_1$. It follows that for $0 < \theta < \alpha$ the vector $v(\theta)$ is a non-transversal direction for $F$. 

\bigskip

Now suppose $G = \{ K_1, K_2, K_3 \}$. (See \textsc{Figure} \ref{fig:boundary}, right.) In this case, each $p_v(K_i)$ is contained in an open halfplane, such that the closures of these three halfplanes intersect in a single point. Moreover, we may assume that the pairwise intersections of the $p_v(K_i)$ are nonempty or else we may apply the  previous case analysis.

As before, let $n$ be the unit normal vector of a plane among the separating directions for $F$ such that $x\cdot n < 0$ for every $x\in K_1$ and $x\cdot n > 0$ for every $x \in K_2$. If $v \cdot n = 0$, then $v\in Y$ and we are done by defining $\gamma$ to be constant path, so we consider the situation when $n\cdot v \neq 0$. Without loss of generality we may assume that, as depicted in Figure~\ref{fig:boundary} right,
\begin{itemize}
    \item $v = (0,0,1)$, 
    \item $v\cdot n > 0$,
    \item $K_1$ lies in the open halfspace $y > ax$ for some $a<0$,
    \item $K_2$ lies in the open halfspace $y > bx$ for some $b>0$, and
    \item $K_3$ lies in the open halfspace $y< 0$.
\end{itemize}

For $\theta \in [0,\pi]$ define the vector $u(\theta) = (\sin\theta, 0, \cos\theta)$, and let $\beta \in (0,\pi)$ be the unique value such that $u(\beta) \cdot n = 0$. This means that $u(\beta)\in Y$, and we will show that for any $0 < \theta < \beta$ we have $u(\theta) \in N(F)$, which will complete the proof. 

As in the previous case, we note that for $0< \theta < \beta$ we have $u(\theta) \cdot n >0$. Therefore, if a line in the direction $u(\theta)$ intersects $K_1$ and $K_2$, then it must intersect $K_1$ before it intersects $K_2$. On the other hand, consider a line $\ell$ in the direction $u(\theta)$ for any $0< \theta< \pi$. If $\ell$ intersects $K_3$, then $\ell$ is contained in the open halfspace $y< 0$, since the $y$-coordinate of $u(\theta)$ equals $0$. If $\ell$ also intersects $K_1$ and $K_2$, then $\ell$ meets $K_2$ in the intersection of the open halfspaces $y<0$ and $x<0$, while $\ell$ meets $K_1$ in the intersection of the open half spaces $y<0$ and $x>0$. Since $u(\theta) \cdot (1,0,0) = \sin \theta  >0$, the line $\ell$ must intersect $K_2$ before it intersects $K_1$. It follows that for $0 < \theta < \beta$ the vector $u(\theta)$ is a non-transversal direction for $F$, and the proof of the claim is complete. 
\end{proof}

\section{Wrapping up}

We now use our path-building Lemma \ref{boundaries} to analyze sets of
directions and sets of line transversals.

\begin{lemma}\label{lem:open}
  Let $F$ be a finite family of at least two pairwise disjoint
  open convex sets in $\mathbb{R}^3$. The set $N(F) = \s^2 \setminus
  T(F)$ of non-transversal directions for $F$ is a path-connected
  subset of~$\s^2$.
\end{lemma}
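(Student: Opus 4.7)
The plan is to deduce Lemma \ref{lem:open} from Lemma \ref{boundaries} by a standard ``walk until you hit the boundary'' reduction. I would first fix once and for all a set $Y$ of separating directions for $F$. The paragraph preceding Lemma \ref{boundaries} already observes that $Y \subseteq N(F)$ and that $Y$ is path-connected (being a union of great circles, any two of which meet on $\s^2$), so it suffices to show that every $v \in N(F)$ can be joined to some point of $Y$ by a path lying entirely in $N(F)$.

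To do this I would pick any $y_0 \in Y$ and any continuous path $\sigma : [0,1] \to \s^2$ with $\sigma(0) = v$ and $\sigma(1) = y_0$, for instance a spherical geodesic. Since $T(F)$ is open (as already noted in the proof of Lemma \ref{boundaries}), the preimage $\sigma^{-1}(T(F))$ is open in $[0,1]$. If this preimage is empty then $\sigma$ itself is the desired path. Otherwise I set $t_0 = \inf\{t \in [0,1] : \sigma(t) \in T(F)\}$. Openness of $\sigma^{-1}(T(F))$ forces $t_0 \notin \sigma^{-1}(T(F))$, so $\sigma(t_0) \in N(F)$; on the other hand any sequence $t_n \downarrow t_0$ with $\sigma(t_n) \in T(F)$ witnesses $\sigma(t_0) \in \overline{T(F)}$. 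Hence $\sigma(t_0)$ lies on the boundary of $N(F)$, and $\sigma|_{[0,t_0]}$ is a path in $N(F)$ from $v$ to this boundary point.

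Finally, Lemma \ref{boundaries} supplies a path in $N(F)$ from $\sigma(t_0)$ to some point of $Y$; concatenating with $\sigma|_{[0,t_0]}$ yields a path in $N(F)$ from $v$ into $Y$, as required. All the real work has already been done in Lemma \ref{boundaries}; the present deduction is a routine point-set-topology argument, and the only minor thing to be careful about is the path-connectedness of $Y$, which rests on the elementary fact that any two distinct great circles on $\s^2$ meet (in a pair of antipodal points). So I do not expect any genuine obstacle beyond cleanly packaging the ``first-exit'' argument.
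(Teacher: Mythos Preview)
Your proposal is correct and follows essentially the same approach as the paper: walk from an arbitrary $v\in N(F)$ along a geodesic until you first hit either $Y$ or the boundary of $N(F)$, and in the latter case invoke Lemma~\ref{boundaries} to continue into $Y$. Your write-up is in fact a more careful formalization of the paper's brief argument, making explicit the first-exit time $t_0$ and why $\sigma(t_0)$ lies on $\partial N(F)$.
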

\begin{proof}
  Let $x$ be a vector in $N(F)$. Starting at $x$, we move along an
  arbitrary geodesic of $\s^2$ until we either meet the boundary of
  $N(F)$ or we meet $Y$, stopping at whichever happens first. If we
  meet $Y$ first, then we are done since $Y$ is a path-connected
  subset of $N(F)$. If we meet the boundary of $N(F)$ first, then we
  change direction and move along the path given by Lemma
  \ref{boundaries}, and again we reach $Y$ by a path contained within
  $N(F)$.
\end{proof}

\noindent
Our main result now easily follows from Lemma~\ref{lem:open} by
arguments spelled out in~\cite[Lemma~24]{acyclic}, which we summarize
here for completeness.

\begin{proof}[Proof of Theorem~\ref{thm:toptrans}]
  Let $F$ be a finite family of at least two pairwise disjoint
  convex sets in $\mathbb{R}^3$. We first prove that the conected
  components of line transversals to $F$ are acyclic when (i) members
  of $F$ are open, and (ii) lines are directed.

  \bigskip
  
  First, let us remark that Lemmas~\ref{lem:open}
  and~\ref{lem:complement} imply that the connected components of
  $T(F)$ are contractible and therefore acyclic.

  \bigskip

  Now, let $\pi: \LL^+ \to \s^2$ denote the map that sends every
  directed line to its direction vector. Let $\LL^+(F)$ denote the set
  of directed line transversals to $F$. For any direction $v \in
  T(F)$, let $\LL_v^+(F)$ denote the set of directed line transversals
  to $F$ with direction $v$. The fact that $\LL_v^+$ is homeomorphic
  to $\bigcap_{K\in F} p_v(K)$ has two consequences: (i) for every $v
  \in T(F)$, the fiber $\pi^{-1}(v) \cap \LL^+(F)$ is contractible,
  and (ii) for every connected component $U$ of $\LL^+(F)$, $\pi(U)$
  is a connected component of $T(F)$. With Lemma~\ref{lem:VB}, the
  acyclicity of the connected components of $T(F)$ therefore implies
  the acyclicity of the connected components of $\LL^+$.
  
  \bigskip

   To go from directed to non-directed lines, let us fix a plane $\Pi$ separating two members of $F$ (which ones does not matter) and a vector $\vec{n}$ normal to $\Pi$. Let $\LL'(F)$ denote the set of directed line transversals to $F$ that make a positive dot product with $\vec{n}$. The map that associates each line in $\LL(F)$ to its orientation making a positive dot-product with $\vec{n}$ induces a homeomorphism between $\LL(F)$ and $\LL'(F)$. Since no line in $\LL^+(F)$ has a direction parallel to $\Pi$, every connected component of $\LL'(F)$ is a connected components of $\LL^+(F)$. Hence, every connected component of $\LL(F)$ is homeomorphic to a connected component of $\LL^+(F)$ and is also acyclic.
\end{proof}

\section{Concluding remarks}

Let us conclude with a construction that shows that for every compact subset $C \subset \R$, there exists a family $F$ of four pairwise disjoint compact convex sets in~$\R^3$ with $T(F)$ homeomorphic to $C$. This highlights some of the issues that arise when trying to replace the openness condition by a compactness assumption in Theorem~\ref{thm:toptrans}.

\bigskip

Let $\Sigma$ denote the hyperbolic paraboloid defined by the equation~$z = xy$. This surface is {\em doubly ruled}: every point $(\alpha,\beta,\alpha\beta)$ is on two lines contained in $\Sigma$, namely $\lambda_\alpha = \Sigma \cap \{x=\alpha\}$ and $\ell_\beta = \Sigma \cap \{y=\beta\}$. 
For $i=1,2,3$, let $S_i$ denote the closed line segment supported by $\lambda_i$ and bounded by the planes $y=1$ and $y=2$. The set of lines transversals to $\{S_1, S_2, S_3\}$ is exactly $\{\ell_b \colon b \in [1,2]\}$.

\bigskip

Now fix an arbitrary compact subset $C \subseteq [1,2]$ and let $\hat{C} = \left\{\pth{\frac1{c-4},c,\frac{c}{c-4}} \colon c \in C\right\}$. Every point of $\hat C$ belongs to the hyperbola with equation $x(y-4)=1$, which is the intersection of $\Sigma$ and the plane~$z = 4y + 1$.  Let $S_4$ denote the convex hull of $\hat C$. The set $S_4$ is compact and convex. Moreover, since $S_4$ is contained in the plane $z = 4y + 1$, we can see that $S_4 \cap \Sigma = \hat C$. It follows that
\[ T(\{S_1,S_2,S_3,S_4\}) = \{\ell_b \colon b \in C\} \simeq C,\]
as announced. Let us stress that using ideas showcased in~\cite{hm-nhtst-04} we can make each~$S_i$ into a convex {\em body}, with non-empty interior, without changing the set of line transversals of the family.

\bigskip

Consider this construction for $C$ a Cantor set. On the one hand, inflating each set $S_i$ into an open set by taking its Minkowski sum with an open ball of radius $\epsilon$ yields, for any choice of $\epsilon>0$, a family with a connected set of line transversals. Conversely, replacing each $S_i$ by its interior yields a family with an empty set of line transversals. Either change drastically modifies the set of line transversal.

\bibliographystyle{plain} \bibliography{ref}

\end{document}